\newtheorem{thm}{Theorem}[section]
\newtheorem{theorem}{Theorem}
\newtheorem{cor}[thm]{Corollary}
\newtheorem{pr}[thm]{Proposition}
\theoremstyle{definition}
\newtheorem{df}[thm]{Definition}
\theoremstyle{remark}
\newtheorem{rem}[thm]{Remark}
\begin{document}
	
	\title[Computable paradoxical decompositions]{Computable paradoxical decompositions}

\author{Karol Duda$^{\dag}$}
\address{Faculty of Mathematics and Computer Science,
	University of Wroc\l aw\\
	pl.\ Grun\-wal\-dzki 2,
	50--384 Wroc\-{\l}aw, Poland}
	\email{karol.duda@math.uni.wroc.pl}
\thanks{$\dag$ Partially supported by (Polish) Narodowe Centrum Nauki, UMO-2018/30/M/ST1/00668.}
\author{Aleksander Ivanov (Iwanow)}
\address{ Department of Applied Mathematics, Silesian University of Technology, ul. Kaszubska 23\\
Gliwice, 44-100, Poland} 
\email{Aleksander.Iwanow@polsl.pl} 

\begin{abstract}
We prove a computable version of Hall's Harem Theorem and apply it to computable versions of Tarski's alternative theorem.    
\end{abstract}

\maketitle

\section{Introduction} 

The Hall harem theorem describes a condition which is equivalent to existence of a perfect $(1,k)$-matching of a bipartite graph, see Theorem H.4.2 in \cite{csc}. 
When $k=1$ this is exactly Hall's marriage theorem, see Section III.2 in \cite{BB}. 
These theorems are useful in amenability.  
For example some versions of Tarski's alternative theorem can be obtained in this way, see Chapter 4 in \cite{csc}  and Section III.1 in \cite{CGH} .   
In \cite{hak} Kierstead found a computable version of Hall's marriage theorem. 
In this paper we generalize his theorem for arbitrary $k$ and give an application of this generalization to effective amenablity. 
 
To introduce the reader to the subject we recall the following definition. 

\begin{df} \label{PD}
Let $X$ be a set and let $G$ be a group which acts on $X$ by permutations. 
The $G$-space $(G,X)$ has a paradoxical decomposition, if there exists a finite set $K\subset G$ and two families  
$(A_k)_{k\in K}$ and $(B_k)_{k\in K}$ of subsets of $X$ such that 
\[ 
X = \Big(\bigsqcup\limits_{k\in K}k(A_k) \Big)\bigsqcup\Big(\bigsqcup\limits_{k\in K}k(B_k)\Big)=\Big(\bigsqcup\limits_{k\in K}A_k\Big)=\Big(\bigsqcup\limits_{k\in K}B_k\Big).
\] 
We call $(K,(A_k)_{k\in K}, (B_k)_{k\in K})$ a paradoxical decomposition of $X$.
\end{df}
Here we use a version of the definition given in \cite{csc}, where some members $A_k$ or $B_k$ can be empty. 
It is equivalent to the traditional one. 
A well-known theorem of A. Tarski \cite{tar} states that the existence of such a paradoxical decomposition is  opposite to amenability of the $G$-space $(G,X)$. 
In particular a group is amenable if and only if it does not admit a paradoxical decomposition. 

It is worth noting that there is a variety of versions of this theorem in different contexts, see for example \cite{MU}, \cite{pat}, \cite{schn} and  \cite{ST}. 
In this paper we will study ones which are natural from the point of view of computability theory, \cite{sri}.  
In the situation when $X= \mathbb{N}$ and $G$ acts by computable permutations one can additionally demand that the families $(A_k )_{k\in K}$ and $(B_k )_{k\in K}$ consist of computable sets. 
We call such a paradoxical decomposition {\em computable}. 

One of versions of Tarski's theorem concerns a very general situation of {\em pseudogroups of transformations}. 
The following definition is taken from \cite{CGH} and \cite{HS1}. 

\begin{df} 
A pseudogroup $\mathcal{G}$ of transformations of a set $X$ is a set of bijections 
$\rho : S \rightarrow  T$ between subsets $S$ and $T \subseteq X$ which satisfies the
following conditions:  \\ 
(i) the identity $id_X$ is in $\mathcal{G}$, \\ 
(ii) if $\rho : S \rightarrow T$ is in $\mathcal{G}$, so is the inverse 
$\rho^{-1} : T \rightarrow S$, 
\\ 
(iii) if 
$\rho_1 : S \rightarrow T$ and $\rho_2 : T \rightarrow U$ 
are in $\mathcal{G}$, so is their composition $\rho_2 \circ \rho_1 : S \rightarrow U$, \\ 
(iv) if $\rho : S \rightarrow T$ is in $\mathcal{G}$ and if $S_0$ is a subset of $S$, the restriction $\rho | S_0$ is in $\mathcal{G}$,  \\ 
(v) if $\rho : S \rightarrow T$ is a bijection between two subsets $S, T$ of $X$ and if there exists a finite partition $S = \bigcup_{j\le n} S_j$ with $\rho | S_j \in \mathcal{G}$ for $j \in {1, . . . , n}$, then  $\rho$ is in $\mathcal{G}$.
\end{df} 

For $\gamma:S\rightarrow T$ in $\mathcal{G}$, we write $\alpha(\gamma)$  for the domain $S$ of $\gamma$ and $\omega(\gamma)$ for its range $T$. 

\begin{df}
When $X$ is countable, after identifying $X$ with $\mathbb{N}$, we say that a transformation 
$\rho : S\rightarrow T$ from $\mathcal{G}$ is computable if $S$ and $T$ are computable subsets of $\mathbb{N}$ and $\rho$ is a computable function. 
\end{df}

Note that for any tuples $( a_1 , \ldots , a_k )$ and $(b_1 , \ldots , b_k )$ with pairwise distinct  coordinates where each $b_i$ is in the same $\mathcal{G}$-orbit with the corresponding $a_i$, the map 
$(a_1 , \ldots , a_k )\rightarrow (b_1 , \ldots , b_k )$ is a computable transformation from $\mathcal{G}$. 

A typical illustration of these notions appears in the case of discrete  metric spaces. 
We remind the reader that given a metric space $(X,d)$ and a subset $F\subseteq X$ the set 
$N_m (F)=\{x\in X \, | \, d(x, F )\leq m\}$ is called the $m$-{\em ball} of $F$. 
A metric space $X$ is called {\em discrete} if the 1-ball of  every finite subset is finite. 

\begin{df} 
For a metric space $X$, the  pseudogroup $W(X)$ of bounded perturbations of the identity consists of bijections $\rho : S \rightarrow T$ such that 
$\mathsf{sup}_{x\in S}( d(\rho(x), x))$ is bounded by some natural number (depending on $\rho$). 
It is called the pseudogroup of wobbling bijections. 
\end{df} 

When $X$ is infinite and discrete the values $\mathsf{sup}_{x\in S}( d(\rho(x), x))$ for $\rho \in W(X)$ are not uniformly bounded by a natural number.  

\begin{df}
When $X$ is conutable, then after identifying $X$ with $\mathbb{N}$, the  effective wobbling pseudogroup   $W_{eff}(X)$ of $X$ is a subset of $W(X)$ consisting of computable transformations of $X$. 
\end{df}

We now formulate one of the definitions of amenability. 
Let $\mathcal{G}$ be a pseudogroup of transformations of $X$. 
For $R\subset \mathcal{G}$ and  $A\subset X$ we define the 
$R$-boundary of $A$ as
\[ 
\partial_R A=\{x\in X\setminus A \text{ : } \exists \rho \in R\cup R^{-1} 
( x\in \alpha(\rho) \text{ and } \rho(x)\in A )\} . 
\]  
\begin{df} \label{pseF} 
The pseudogroup $\mathcal{G}$ satisfies the F\o lner condition if for any finite subset $R$ of $\mathcal{G}$ and any natural number $n$ there exists a finite non-empty subset $F=F(R,n)$ of $X$ such that 
$|\partial_R F|<\frac{1}{n} |F|$. 
\end{df} 
The following theorem is a version of Tarski's theorem mentioned above, 
see Theorems 7 and 25 in \cite{CGH}. 
\begin{itemize} 
\item  The pseudogroup $\mathcal{G}$ satisfies the F\o lner condition if and only if there is no tuple $(X_1,X_2,\gamma_1,\gamma_2)$ consisting of a non-trivial partition $X=X_1\sqcup X_2$ and $\gamma_i\in \mathcal{G}$ with $\alpha(\gamma_i)=X_i$ and $\omega(\gamma_i)=X$ for $i=1,2$. 
\end{itemize} 

\begin{rem} \label{actF}  
Definition \ref{pseF} can be applied to an action of a group $G$ on 
a set $X$ by permutations. 
In this case we will say that the $G$-space $(G,X)$ 
satisfies F\o lner's condition.   
\end{rem} 
 
The motivation for computable versions of this theorem comes from recent 
investigations in effective amenability theory, \cite{MC2}, \cite{MC3} 
and \cite{mor}, where some effective versions of F\o lner's condition were suggested.  
Our main result connects this approach with paradoxical decompositions. 
 
In Section 2 we generalize the work of Kierstead  \cite{hak} 
concerning an effective version of Hall's Theorem. 
These results will be applied in Section 3  
to some computable versions of Tarski's alternative theorem.
In Section 4 we study some complexity issues which are naturally connected with the main results of the paper. 

We do not demand any special education of the reader in computability theory. 
Facts which we use are well-known and easily available in \cite{sri}. 
Following trends in logic we say computable instead of recursive.   

\section{A computable version of Hall's Harem Theorem} 

A graph $\Gamma=(V,E)$ is called a {\em bipartite graph} if the set of vertices $V$ is partitioned into sets $A$ and $B$ in such way, that the set of edges $E$ is a subset of $A\times B$. We denote such a bipartite graph by $\Gamma=(A,B,E)$. The set $A$ (resp. $B$) is called the set of {\em left} (resp. {\em right}) {\em vertices}.
From now on we concentrate on bipartite graphs. 
Although our definitions concern this case they usually have obvious extensions to all ordinary graphs. 

Let $\Gamma=(A,B,E)$. 
When $(a,b)$ is an edge from $E$, it is called {\em adjacent} to vertices $a$ and $b$. 
In this case we say that $a$ and $b$ are adjacent too. 
When  two edges $(a,b),(a',b')\in E$ have a common adjacent vertex we say that $(a,b),(a',b')$ are also {\em adjacent}.  
A sequence $( a_1 , a_2 , \ldots , a_n )$ of vertices is called a {\em path} if each pair $( a_i$, $a_{i+1})$ is adjacent for $1 \le i\le n$. 

Given a vertex $x\in A\cup B$ the {\em neighbourhood} of  $x$ is the set 
\[ 
N _{\Gamma}(x)=\{y\in A\cup B: (x,y)\in E\}.
\]  
For subsets $X\subseteq A$ and $Y\subseteq B$, we define the neighbourhood 
$N _{\Gamma}(X)$ of $X$ and the neighbourhood $N _{\Gamma}(Y)$ of $Y$ by
\[
N _{\Gamma}(X)=\bigcup\limits_{x\in X} N _{\Gamma}(x) \subseteq B  \, \text{ and } \, N _{\Gamma}(Y)=\bigcup\limits_{y\in Y} N _{\Gamma}(y)\subseteq A.
\] 
The subscript $\Gamma$ is dropped if it is clear from the context.

In this section we always assume that $\Gamma$ is {\em locally finite},  i.e. the set $N(x)$ is finite for all $x\in A\cup B$. 

A subset $X$ of $A$ (resp. of $B$) is called {\em connected} if for all $x, x' \in X$ there exist a path $( p_1,\ldots, p_k )$ in $\Gamma$ with $x=p_1$ and $x'=p_k$ such that $p_i\in X\cup N_{\Gamma}(X)$ for all $i\le k$.

For a given vertex $v\in A\cup B$ the {\em star} of $v$ is a subgraph $S=(V',E')$ of $\Gamma$, with $V'=\{v\}\cup N_{\Gamma}(v)$ and $E'=(V'\times V' )\cap E$.

\begin{df} 
A \textsl{matching} ($(1,1)$-matching) for $\Gamma$ 
is a subset $M\subset E$ of pairwise nonadjacent edges. 
A matching $M$ is called \textsl{left-perfect} 
(resp. \textsl{right-perfect}) 
if for all $a \in A$ (resp. $b\in B$)  there exists (exactly one) 
$b\in B$ (resp. $a\in A$) with $(a,b)\in M$.
The matching $M$ is called \textsl{perfect} if it is both right and left-perfect.
\end{df} 

We now introduce perfect $(1,k)$-matchings for $\Gamma$ without defining $(1,k)$-matchings. 
We will use only perfect ones.

\begin{df}
A perfect $(1,k)$-matching for $\Gamma$ is a subset $M\subset E$ satisfying the following conditions: 

\begin{enumerate}[(1)]
\item for all $a \in A$ there exist exactly $k$ vertices $b_1,\ldots b_k \in B$ such that \newline
 $(a,b_1),\ldots,(a,b_k)\in M$;

\item for all $b \in B$ there is a unique vertex $a\in A$ such that $(a,b)\in M$. 
\end{enumerate} 
\end{df}
Given a $(1,k)$-matching $M$ and a vertex $a\in A$ the $M$-{\em star} of $a$ is the graph consisting of all vertices and edges adjacent to $a$ in $M$. 

The following Theorem is known as {\em the Hall harem theorem}, 
and the first of equivalent conditions below is known as {\em Hall's $k$-harem condition}, see Theorem H.4.2 in  \cite{csc}.

\begin{thm} \label{H42} 
Let $\Gamma=(A,B,E)$ be a locally finite graph and let $k\in \mathbb{N},\; k\geq 1$. The following conditions are equivalent: 
\begin{enumerate}[(i)]
\item For all finite subsets $X\subset A$, $Y\subset B$ the following inequalities hold: 
\newline
$|N(X)|\geq k|X|$, $|N(Y)|\geq \frac{1}{k}|Y|$.

\item $\Gamma$ has a perfect $(1,k)$-matching.
\end{enumerate} 
\end{thm}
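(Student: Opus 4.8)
The plan is to reduce the harem theorem to the ordinary marriage theorem by replacing each left vertex with $k$ identical copies; the direction (ii)$\Rightarrow$(i) is a bare counting argument, so the substance lies in (i)$\Rightarrow$(ii). For (ii)$\Rightarrow$(i), suppose $M$ is a perfect $(1,k)$-matching. Given a finite $X\subseteq A$, each $a\in X$ contributes $k$ partners in $N(X)$, and by condition (2) these $k|X|$ partners are pairwise distinct, so $|N(X)|\ge k|X|$. Given a finite $Y\subseteq B$, each $b\in Y$ is matched to some $a\in N(Y)$, and each such $a$ absorbs at most $k$ elements of $Y$, whence $|Y|\le k|N(Y)|$, i.e. $|N(Y)|\ge\tfrac{1}{k}|Y|$.

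For (i)$\Rightarrow$(ii) I would build an auxiliary bipartite graph $\Gamma'=(A',B,E')$ with $A'=\{a^{(1)},\dots,a^{(k)}:a\in A\}$ consisting of $k$ copies of every left vertex, declaring $(a^{(i)},b)\in E'$ exactly when $(a,b)\in E$. Since $\Gamma$ is locally finite and each $b$ now sees precisely $k$ copies of each of its original neighbours, $\Gamma'$ is again locally finite. The key computation is that the two inequalities of (i) are precisely the two-sided Hall condition for $\Gamma'$. Writing $\pi\colon a^{(i)}\mapsto a$ for the projection, a finite $X'\subseteq A'$ with image $X=\pi(X')$ satisfies $N_{\Gamma'}(X')=N_\Gamma(X)$ and $|X'|\le k|X|$, so $|N_{\Gamma'}(X')|=|N_\Gamma(X)|\ge k|X|\ge|X'|$ follows from the first inequality; dually, for finite $Y\subseteq B$ one has $|N_{\Gamma'}(Y)|=k|N_\Gamma(Y)|\ge|Y|$ exactly when $|N_\Gamma(Y)|\ge\tfrac{1}{k}|Y|$.

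It then remains to invoke the symmetric marriage theorem for locally finite bipartite graphs: if both Hall inequalities $|N(X')|\ge|X'|$ and $|N(Y)|\ge|Y|$ hold for all finite sets, then $\Gamma'$ has a perfect $(1,1)$-matching $M'$. Collapsing copies, I set $M=\{(a,b):(a^{(i)},b)\in M'\text{ for some }i\}$ and claim this is a perfect $(1,k)$-matching of $\Gamma$: each $a$ receives exactly the $k$ partners of its copies $a^{(1)},\dots,a^{(k)}$, pairwise distinct because $M'$ is a matching, and each $b$ retains its unique $M'$-partner, giving conditions (1) and (2). I expect the main obstacle to be the infinite symmetric marriage theorem itself: over infinite $A,B$ the two Hall conditions no longer force the cardinalities to agree, so the finite equality argument is unavailable and a matching saturating only one side does not suffice. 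Instead I would produce $M'$ by a compactness / K\"onig's-lemma argument, exhausting $\Gamma'$ by finite subgraphs, matching each while simultaneously controlling both sides, and extracting a coherent limiting matching using local finiteness. This is exactly the delicate point that the later computable refinement of Kierstead's construction must carry out effectively.
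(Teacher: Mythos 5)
The paper itself contains no proof of Theorem \ref{H42}: it is quoted as the known Hall harem theorem with a pointer to Theorem H.4.2 of \cite{csc}, and is then used as a black box (only its effective refinement, Theorem \ref{K}, is proved). So your proposal can only be compared with the proof in that reference --- and in outline it \emph{is} that proof. Your direction (ii)$\Rightarrow$(i) is correct. The reduction for (i)$\Rightarrow$(ii) is also correct: the $k$-fold duplication of left vertices yields a locally finite bipartite graph $\Gamma'$, your two computations show that the inequalities in (i) translate exactly into the two-sided Hall condition for $\Gamma'$, and collapsing a perfect matching $M'$ of $\Gamma'$ gives a perfect $(1,k)$-matching of $\Gamma$, the $k$ partners of the copies of $a$ being pairwise distinct precisely because $M'$ is a matching. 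This duplication trick is the one used in \cite{csc}.

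The fragile point is your plan for the symmetric marriage theorem itself. Exhausting $\Gamma'$ by finite subgraphs and ``matching each while simultaneously controlling both sides'' does not work as stated: a finite induced subgraph of a graph satisfying both Hall conditions will in general satisfy \emph{neither}, because vertices near the boundary of the subgraph lose most of their neighbours; the finite pieces may have no perfect matching at all. (This boundary defect is exactly what the paper's proof of Theorem \ref{K} must fight, via the sets $B_{S_0}$ of boundary vertices that are exempted from the matching conditions.) The standard repair, which is also how Theorem H.4.1 of \cite{csc} is proved, is to apply the \emph{one-sided} locally finite Hall theorem twice --- the Hall condition on $A'$ alone gives, by a clean compactness/K\"onig argument on sets of the form $X\cup N(X)$ (which do inherit the one-sided condition), a matching saturating $A'$, and symmetrically one gets a matching saturating $B$ --- and then to merge the two matchings into a perfect matching by the Cantor--Schr\"oder--Bernstein alternating path/cycle argument. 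Alternatively one can run compactness where the finite approximations are matchings of the whole graph saturating a prescribed finite vertex set, but producing those again requires the finite CSB merge. Either way, the Cantor--Schr\"oder--Bernstein step is the missing ingredient in your sketch; with it inserted, your proof is complete and agrees with the cited one.
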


In order to define computable versions of these conditions we follow Kierstead's paper \cite{hak}.  
Definitions \ref{kie1} - \ref{kie3} are due to Kierstead. 
Definitions \ref{cpkm} and \ref{kie4} are natural generalizations of the corresponding ones from \cite{hak}.   
 
\begin{df} \label{kie1}
A graph $\Gamma =(V,E)$ is computable if there exists a bijective function $\nu: \mathbb{N}\rightarrow V$ such that the set 
\[
R:=\{(i,j): (\nu(i),\nu(j))\in E\} 
\] 
is computable. 
\end{df} 

\begin{df} \label{kie2}
A bipartite graph $\Gamma=(A,B,E)$ is computably bipartite  
if $\Gamma$ is computable as a graph with respect to some $\nu$ and the set $\nu^{-1}(A) = \{ n\in \mathbb{N}: \nu (n) \in A \} \subset \mathbb{N}$ is computable.
\end{df}

To simplify the matter below we will always identify $A$ and $B$ with $\mathbb{N}$. 
Thus $A$ (resp. $B$) will be called the left (resp. right) copy of $\mathbb{N}$ and the function $\nu$ will be the identity map.

\begin{df} \label{kie3} 
A locally finite (bipartite) graph $\Gamma$ is called highly computable if it is  computable and the function 
$n \rightarrow |N_{\Gamma}(n)|$ for $n\in\mathbb{N}$ is computable. 
\end{df}

\begin{df}\label{cpkm}
Let $\Gamma=(A,B,E)$ be a computably bipartite graph. 
A perfect $(1,k)$-matching $M$ for $\Gamma$ is called 
computable if the set $\{ (i,j): (\nu(i), \nu(j))\in M\}\subset \mathbb{N} \times \mathbb{N}$ is computable. 
\end{df} 

Note that computable perfectness exactly means that there is  an algorithm which 
\begin{itemize}
\item for each $i \in A$, finds the tuple $(i_1,i_2, \ldots, i_k)$ such that $(i,i_j )\in M$, for all $j=1,2,\ldots, k$; 
\item when $i\in B$ it finds $i'\in A$ such that $(i',i)\in M$.
\end{itemize}

The remainder of this section will be devoted to a proof that the following condition implies the existence of a computable perfect $(1,k)$-matching.

\begin{df} \label{kie4} 
A highly computable bipartite graph $\Gamma=(A,B,E)$ satisfies 
the computable expanding Hall's harem condition with respect to $k$  
(denoted $c.e.H.h.c.(k)$), if and only if there is a  computable  function $h: \mathbb{N} \rightarrow \mathbb{N}$ with domain $\mathbb{N}$ such that:
\begin{itemize}
\item $h(0)=0$
\item for all finite sets $X\subset A$, the inequality $h(n)\leq |X|$ implies $n\leq |N(X)|-k|X|$
\item for all finite sets  $Y\subset B$, the inequality $h(n)\leq |Y|$ implies $n\leq |N(Y)|-\frac{1}{k}|Y|$.
\end{itemize}
\end{df} 

Clearly, if the graph $\Gamma$ satisfies the $c.e.H.h.c.(k)$, then it satisfies Hall's $k$-harem condition. 
We emphasize that the requirements that $h$ is total and computable, essentially strengthen the latter ones. 
Moreover, Theorems 2 and 5 of \cite{hak} state that the natural effective version of Hall's marriage theorem (i.e. when $k=1$) does not hold without the assumptions that $h$ exists and is computable. 
It is worth noting that Theorem 2 of \cite{hak} is a citation of a result of Manaster and Rosenstain from \cite{MR}. 

\begin{thm} \label{K} 
If $\Gamma=(A,B,E)$ is a highly computable bipartite graph satisfying the $c.e.H.h.c.(k)$, then $\Gamma$ has a computable perfect $(1,k)$-matching.
     
\end{thm}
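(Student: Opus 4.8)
The plan is to reduce the statement to the case $k=1$, which is exactly Kierstead's computable Hall marriage theorem from \cite{hak}, by means of a $k$-fold blow-up of the left side. Concretely, I would introduce an auxiliary bipartite graph $\Gamma'=(A',B',E')$ whose right side $B'=B$ is unchanged, whose left side is $A'=A\times\{1,\dots,k\}$, and whose edges are given by $(\langle a,i\rangle,b)\in E'$ iff $(a,b)\in E$. Thus every left vertex $a$ of $\Gamma$ is replaced by $k$ identical copies $\langle a,1\rangle,\dots,\langle a,k\rangle$ all sharing the neighbourhood $N_\Gamma(a)$. Fixing a computable bijection between $\mathbb N$ and $\mathbb N\times\{1,\dots,k\}$, the graph $\Gamma'$ becomes computably bipartite in the sense of Definition \ref{kie2}; it is locally finite, and since $|N_{\Gamma'}(\langle a,i\rangle)|=|N_\Gamma(a)|$ while $|N_{\Gamma'}(b)|=k\,|N_\Gamma(b)|$, the map $n\mapsto|N_{\Gamma'}(n)|$ is computable, so $\Gamma'$ is highly computable (Definition \ref{kie3}).

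Next I would check that $\Gamma'$ satisfies $c.e.H.h.c.(1)$ witnessed by the computable function $h'(n)=k\,h(n)$, where $h$ witnesses $c.e.H.h.c.(k)$ for $\Gamma$; note $h'(0)=0$. For the left inequality, given a finite $X'\subseteq A'$ with projection $X\subseteq A$ one has $|X'|\le k|X|$ and $N_{\Gamma'}(X')=N_\Gamma(X)$, so $h'(n)\le|X'|$ forces $h(n)\le|X|$, whence $n\le|N_\Gamma(X)|-k|X|\le|N_{\Gamma'}(X')|-|X'|$. The delicate point is the right inequality, since the blow-up multiplies right neighbourhoods: for finite $Y\subseteq B'=B$ we have $|N_{\Gamma'}(Y)|=k\,|N_\Gamma(Y)|$, and if $h'(n)\le|Y|$ then $h(n)\le|Y|$, so $n\le|N_\Gamma(Y)|-\tfrac1k|Y|$; multiplying by $k\ge 1$ and using $n\le kn$ yields $n\le k|N_\Gamma(Y)|-|Y|=|N_{\Gamma'}(Y)|-|Y|$, as required. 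Having verified the hypotheses, I would apply Kierstead's theorem (the case $k=1$, \cite{hak}) to obtain a computable perfect matching $M'$ of $\Gamma'$, and then set $M=\{(a,b):(\langle a,i\rangle,b)\in M'\text{ for some }i\le k\}$. Right-perfectness of $M'$ forces the $k$ copies of each $a$ to be matched to $k$ distinct vertices of $B$, so every $a$ acquires exactly $k$ mates and every $b$ exactly one preimage in $M$; membership in $M$ is decided by the finite search over $i\le k$ in the computable set $M'$, so $M$ is a computable perfect $(1,k)$-matching in the sense of Definition \ref{cpkm}.

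The step I expect to carry the real weight is the base case $k=1$ itself, so a fully self-contained argument would have to reconstruct Kierstead's proof rather than cite it. In a direct construction one enumerates requirements asking that the $s$-th left vertex receive $k$ mates and the $s$-th right vertex receive one, and satisfies them stage by stage using augmenting paths; here the genuine obstacle is computability rather than existence, since without the totality and computability of $h$ the matching need not be computable at all (this is the Manaster--Rosenstein phenomenon from \cite{MR} recalled before the statement). The role of $h$ is to convert the expansion surplus $|N(X)|-k|X|$ into a \emph{computable} bound on how far an augmenting path must be searched, and hence on the stage by which the status of a fixed edge $(a,b)$ stabilises. I would therefore isolate, as the crux, a lemma asserting that the surplus guaranteed by $h$ lets every augmenting path be chosen inside a computably bounded ball, uniformly and on both sides of the bipartition simultaneously, so that decisions about vertices in any initial segment are never disturbed at later stages.
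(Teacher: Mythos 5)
Your blow-up reduction is correct as far as it goes, and it is a genuinely different route from the paper's. The verification that $\Gamma'$ satisfies $c.e.H.h.c.(1)$ with witness $h'(n)=k\,h(n)$ is sound on both sides (including $n=0$, since $h'(0)=0$), and the translation of a computable perfect matching $M'$ of $\Gamma'$ into a computable perfect $(1,k)$-matching of $\Gamma$ works: the $k$ copies of each $a$ get distinct mates because $M'$ is a matching, and uniqueness in condition (2) follows from right-perfectness of $M'$. The paper does none of this. It proves the theorem directly for every $k$ by extending Kierstead's construction: a back-and-forth over $A$ and $B$ in which, at each stage, one computes the finite ball of radius $\max\{2h(k)+1,3\}$ (on the $B$-side, $\max\{2h'(k)+2,4\}$) around the current vertex, finds by finite search a $(1,k)$-matching of that ball which is perfect except on the boundary sphere, adds the star of the current vertex to $M$, deletes it from the graph, and shows the reduced graph again satisfies $c.e.H.h.c.(k)$ with the shifted witness $h'(n)=h(n+k)$ for $n>0$, $h'(0)=0$. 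Your route is shorter and more modular -- it shows the harem case costs nothing beyond the marriage case; the paper's route is self-contained and in particular \emph{proves} the $k=1$ case rather than assuming it.

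That is exactly where the one real caveat lies, and you half-identify it yourself. Your argument needs the $k=1$ statement in its \emph{two-sided} form: a computable matching saturating both $A'$ and $B'$, obtained from the two-sided condition $c.e.H.h.c.(1)$. Kierstead's Theorem 3 in \cite{hak}, like the Manaster--Rosenstein problem \cite{MR} it answers, is stated for espousals, i.e.\ matchings saturating only the left side. If only that one-sided statement is invoked, your construction yields condition (1) of a perfect $(1,k)$-matching but not condition (2): some $b\in B$ could remain unmatched, and no effective Cantor--Schr\"oder--Bernstein-style patching of a left-saturating and a right-saturating matching is available to repair this. The symmetric $k=1$ statement is precisely the case $k=1$ of the theorem under discussion, and the back-and-forth construction sketched above is what the paper supplies to prove it. Your closing paragraph correctly flags that the base case carries the real weight and must be handled ``on both sides of the bipartition simultaneously,'' but a sketch is not a proof; so either check that the symmetric version can genuinely be extracted from \cite{hak}, or carry out the $k=1$ back-and-forth in detail -- after which your blow-up does give the general case for free.
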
 

\begin{proof}
We extend the proof of Theorem 3 of Kierstead's paper \cite{hak}. 
Let $h$ witness the $c.e.H.h.c.(k)$ for $\Gamma$. 
Let us fix computable enumerations of $A$ and $B$. 
We build a perfect $(1,k)$-matching $M$ by induction.  
The idea of the construction is as follows. 
At step 0 put $M=\emptyset$.
At step $s$ we update the already constructed $M$ in the following way. For the first vertex $x_s$ from the remaining part of $A$ or $B$ we construct some finite subgraph $\Gamma_s$ and a matching $M_s$ in $\Gamma_s$. 
The matching $M$ is updated by adding the elements of $M_s$ adjacent to $x_s$. 
The subgraphs $\Gamma_s$ and $M_s$ are constructed so that after removal of the $M_s$-star of $x_s$ from $\Gamma$, the remaining part  still is a highly computable bipartite graph satisfying the $c.e.H.h.c.(k)$.
  
At the first step of the algorithm we choose $a_0$, the first element of the set $A$. 
We construct the induced subgraph $\Gamma_0=(A_0,B_0,E_0)$ so that $A_0\cup B_0$ is the set of vertices of distance of at most $\max\{2h(k)+1,3\}$ from $a_0$. 
Since the graph $\Gamma$ is locally finite (resp. highly computable) the graph $\Gamma_0$ is finite and can be found effectively. 
It is clear that for all vertices $v$ from $A_0$, $N_{\Gamma_{0}}(v)=N_{\Gamma}(v)$. 
Therefore, for every subset $X\subset A_0$ the inequality $h(n)\leq |X|$ implies $n\leq |N_{\Gamma_{0}}(X)|-k|X|$.

Let $B_{S_0}$ denote the set of vertices $v\in B_0$ at distance $\max\{2h(k)+1,3\}$ from $a_0$. 
It is clear that $N_{\Gamma_{0}}(B_0\setminus B_{S_0})=N_{\Gamma}(B_0\setminus B_{S_0})=A_0$. 
On the other hand since it may happen that $N_{\Gamma}(B_{S_0})$ is not contained in $A_0$, it is possible that there exists a subset $Y\subset B_{S_0}$, such that $|N_{\Gamma_{0}}(Y)|\leq\frac{1}{k}|Y|$. 

Since $\Gamma$ contains a perfect $(1,k)$-matching, there exists a $(1,k)$-matching in $\Gamma_0$, that satisfies the conditions of perfect $(1,k)$-matchings for all $a\in A_0$ and $b\in B_0 \setminus B_{S_0}$. 
We denote it by $M_0$. 
Since $\Gamma_0$ is finite, the matching $M_0$ can be obtained effectively. 
Let $\{(a_0 ,b_{0,1}),\ldots, (a_0 ,b_{0,k})\}$ be the set of all edges from $a_0$ which belong to $M_0$. 
At step 1 we define $M$ to be the set of all these pairs.

Let $\Gamma'$ be the subgraph (yet bipartite) obtained from $\Gamma$ through removal of the $M_0$-star of $a_0$.
Since the sets $A\cup B$, $A$ and $E$ are computable, and the matching $M_0$ is found effectively, the sets $A'$, $B'$ and $E'$ are also computable.
Therefore $\Gamma'$ is a computably bipartite graph.
Since $\Gamma$ is highly computable, the graph $\Gamma'$ is highly computable too.
To finish this step it suffices to show that $\Gamma'$ satisfies $c.e.H.h.c.(k)$.

Define $h' :\mathbb{N} \rightarrow \mathbb{N}$ by setting 
\[ 
h'(n) = \left\{
\begin{array}{rr}
0, \quad \text{if} \quad n=0, \ \\
h(n+k), \quad \text{if} \quad n > 0.
\end{array}\right.
\] 
We claim that $h'$ works for $\Gamma'$. 
We start with the case when $X\subset A'$ and $n>0$.
Since $|N_{\Gamma'}(X)|\geq |N_{\Gamma}(X)|-k$, then for $n\geq 1$ the inequality $|X|>h'(n)$ implies 
$|N_{\Gamma'}(X)|-k|X|\geq |N_{\Gamma}(X)|-k|X|-k\geq n$.

Let us consider the case when $n=0$ and $X$ is still a subset of $A'$. 
If $X$ is not connected, then its neighbourhood would be the union of nieghbourhoods of its connected subsets. 
Therefore without loss of generality, we may assume that $X$ is connected. 
If $X\subset A_0$, then $|N_{\Gamma'}(X)|- k|X| \geq 0$, since $M_0$ was a $(1,k)$-matching for $\Gamma_0$ that was perfect for subsets of $A_0$. 

Now, let $a'\in X\setminus A_0$.  
If $b_{0,1},\ldots, b_{0,k} \notin N_{\Gamma}(X)$, then 
$N_{\Gamma'}(X) = N_{\Gamma}(X)$, so $|N_{\Gamma'}(X)|-k|X|\geq 0$. 
Assume that for some $i\leq k$ and some $a\in X,$ there exists $(a,b_{0,i})\in E$. 
Since the distance between $a$ and $a'$ is at least $2h(k)$ we have $|X|\geq h(k)+1$. 
Thus $|N_{\Gamma}(X)|-k|X|\geq k$ and it follows that $|N_{\Gamma'}(X)|-k|X|\geq 0$. 
We conclude that the Hall condition for finite subsets of $A'$ is verified. 

Now we need to show that $\Gamma'$ satisfies 
$c.e.H.h.c.(k)$ for finite sets $Y\subset B'$. 
We have to show that the inequality $h'(n)\leq |Y|$ implies $n\leq |N_{\Gamma'}(Y)|-\frac{1}{k}|Y|$.
Note $Y\subset B'=B \setminus \{b_{0,1},\ldots, b_{0,k}\}$ 
and $|N_{\Gamma'}(Y)|\geq |N_{\Gamma}(Y)|-1$.

In the case $n\!>\!0$ the inequality $|Y|>h'(n)$ 
implies \!
$|N_{\Gamma'}(Y)|-\frac{1}{k}|Y|\geq |N_{\Gamma}(Y)|-\frac{1}{k}|Y|-1\geq n+k-1\geq n$.

Let us consider the case $n=0$. 
As before, we may assume that $Y$ is connected.
If $Y\subset B_0\setminus B_{S_0}$, then 
$|N_{\Gamma'}(Y)|- \frac{1}{k}|Y| \geq 0$,  since $M_0$ satisfied the conditions of a perfect $(1,k)$-matching for elements of  $B_0\setminus B_{S_0}$. 
If $a_0\notin N_{\Gamma}(Y)$, then 
$N_{\Gamma'}(Y)=N_{\Gamma}(Y)$ and again $|N_{\Gamma'}(Y)|- \frac{1}{k}|Y| \geq 0$. 

Assume that there exists $b'\in Y\setminus (B_0\setminus B_{S_0})$ and there exists $b\in Y $ with the edge $(a_0,b)\in E$.
Since the distance between $b$ and $b'$ is at least $2h(k)$ we have $|Y|\geq h(k)+1$. 
It follows that 
$|N_{\Gamma}(Y)|- \frac{1}{k}|Y| \geq k$ and $|N_{\Gamma'}(Y)|-\frac{1}{k}|Y|\geq k-1\geq 0$.
As a result we have that the graph $\Gamma'$ satisfies $c.e.H.h.c.(k)$. 

To force $M$ to be a perfect $(1,k)$-matching, we use back and forth. 
Therefore we start the next step of our algorithm by choosing the first element  of $B'$, say $b_{1,1}$. 
We construct the induced subgraph $\Gamma_1=(A_1,B_1,E_1)$ so that $A_1\cup B_1$ is a set of vertices of $\Gamma'$ at distance at most $\max\{2h'(k)+2,4\}$ from $b_{1,1}$.
Let $B_{S_1}$ denote the set of vertices at distance $\max\{2h'(k)+2,4\}$ from $b_{1,1}$.
Since $\Gamma'$ contains a perfect $(1,k)$-matching, there exist a $(1,k)$-matching in $\Gamma_1$ that satisfies the conditions of a perfect $(1,k)$-matching for all $a\in A_1$ and $b\in B_1\setminus B_{S_1}$. 
We denote it by $M_1$. 
We choose $a_1$ with $(a_1,b_{1,1})\in M_1$. 
Let $\{ (a_1 ,b_{1,2}),\ldots, (a_1 ,b_{1,k})\}$ be all remaining edges of the $M_1$-star of  $a_1$. 
We update $M$ by adding all edges of this star.

Let $\Gamma''$ be a subgraph obtained from $\Gamma'$ through removal of the $M_1$-star of $a_1$. 
Then $\Gamma''$ is also a highly computable computably bipartite graph. 
We need to show that $\Gamma''$ satisfies $c.e.H.h.c.(k)$.

Define $h'': \mathbb{N} \rightarrow \mathbb{N}$ by setting 
\[
h''(n) = \left\{
\begin{array}{rr}
0, \quad \text{if} \quad n=0, \ \\
h'(n+k), \quad \text{if} \quad n > 0.
\end{array}\right.
\] 
To prove that $h''(n)$ works for $\Gamma''$ we use the same method as in the case $h'(n)$ and $\Gamma'$.

We continue iteration by taking the elements of $A$ at even steps and 
the elements of $B$ at odd steps. 
At every step $n$, the graph $\Gamma^{(n)}$ satisfies the conditions for the existence of perfect $(1,k)$-matchings and we update $M$ by adding $k$ edges adjacent to $a_n$. 
Every vertex $v$ will be added to $M$ at some step of the algorithm. 
It follows that $M$ is a perfect $(1,k)$-matching of the graph $\Gamma$. 
Effectiveness of our back and forth construction guarantees that  $M$ is computable. 
\end{proof}

\section{Effective paradoxical decomposition}

The following definition gives an effective version of a paradoxical decomposition. 
Assume that a pseudogroup $\mathcal{G}$ acts on a countable set $X$.  
We will identify $X$ with $\mathbb{N}$. 

\begin{df}
Let $\mathcal{G}$ be  a pseudogroup of transformations of a set $X = \mathbb{N}$. 
An effective paradoxical $\mathcal{G}$-decomposition of $(\mathcal{G},X)$ is a tuple $(X_1,X_2,\gamma_1,\gamma_2)$ consisting of a non-trivial partition $X=X_1\sqcup X_2$ into computable sets and computable $\gamma_i\in \mathcal{G}$ with $\alpha(\gamma_i)=X_i$ and $\omega(\gamma_i)=X$ for $i=1,2$. 
\end{df} 
We now formulate the main theorem of this section. 

\begin{thm}\label{pdx} 
Let $(\mathcal{G},X)$ be a pseudogroup of computable transformations  defined on $\mathbb{N}$ which does not satisfy F\o lner's condition. 
Then $X$ has an effective paradoxical $\mathcal{G}$-decomposition. 
\end{thm}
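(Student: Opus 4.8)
The plan is to convert the failure of F\o lner's condition into a strict expansion property of a bipartite ``doubling'' graph, feed that graph into Theorem \ref{K} with $k=2$, and read off the pair $(\gamma_1,\gamma_2)$ from the resulting computable perfect $(1,2)$-matching. Throughout I identify $A$ and $B$ with $X=\mathbb{N}$. First I would unwind the hypothesis: since $(\mathcal{G},X)$ fails F\o lner's condition, there are a finite $R\subset\mathcal{G}$ and an $n_0\in\mathbb{N}$ such that $|\partial_R F|\geq\frac{1}{n_0}|F|$ for every finite non-empty $F\subseteq X$. Put $S_0=R\cup R^{-1}\cup\{id_X\}$ and write $S_0\cdot F=\{\rho(x):\rho\in S_0,\ x\in\alpha(\rho)\cap F\}$. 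A short computation (sending $x\in\partial_R F$ with $\rho(x)\in F$ to $x=\rho^{-1}(\rho(x))$) shows $\partial_R F\subseteq(S_0\cdot F)\setminus F$, and since $id_X\in S_0$ gives $F\subseteq S_0\cdot F$, one gets $|S_0\cdot F|\geq(1+\tfrac{1}{n_0})|F|$. Iterating this one-step estimate along $F,\,S_0\cdot F,\,S_0^2\cdot F,\dots$ yields $|S_0^{\,m}\cdot F|\geq(1+\tfrac{1}{n_0})^{m}|F|$; I would fix once and for all an $m$ with $(1+\tfrac{1}{n_0})^{m}\geq 3$ and let $S$ be the finite symmetric set of all compositions of at most $m$ elements of $S_0$. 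Then $id_X\in S\subseteq\mathcal{G}$ and $|S\cdot F|\geq 3|F|$ for every finite non-empty $F$.

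Next I would build the doubling graph $\Gamma=(A,B,E)$ with $A=B=X$ and $(x,y)\in E$ iff $y\in S\cdot x$. Because $S$ is symmetric, one checks $N_\Gamma(X)=S\cdot X$ for $X\subseteq A$ and $N_\Gamma(Y)=S\cdot Y$ for $Y\subseteq B$, so the expansion bound gives both $|N(X)|\geq 3|X|\geq 2|X|$ and $|N(Y)|\geq 3|Y|\geq\tfrac{1}{2}|Y|$. Since $S$ is a finite set of computable transformations with computable domains $\alpha(\rho)$, the edge relation is computable and $n\mapsto|N_\Gamma(n)|$ is computable, so $\Gamma$ is highly computable. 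Moreover $h(n)=n$ witnesses $c.e.H.h.c.(2)$: one has $h(0)=0$, while $|N(X)|-2|X|\geq|X|$ and $|N(Y)|-\tfrac{1}{2}|Y|\geq|Y|$ force $h(n)\leq|X|\Rightarrow n\leq|N(X)|-2|X|$ and likewise on the right. Theorem \ref{K} with $k=2$ then provides a computable perfect $(1,2)$-matching $M$ of $\Gamma$.

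Finally I would convert $M$ into the decomposition. Each $a\in A$ has exactly two $M$-neighbours in $B$; listing them in increasing order as $b_1(a)<b_2(a)$ is a computable choice, and since every $b\in B$ is the $M$-partner of a unique $a\in A$, the sets $X_i=\{b_i(a):a\in A\}$ form a computable non-trivial partition $X=X_1\sqcup X_2$ (both parts are infinite). Define $\gamma_i:X_i\to X$ by $\gamma_i(b_i(a))=a$; this is a computable bijection of $X_i$ onto $X$, so $\alpha(\gamma_i)=X_i$ and $\omega(\gamma_i)=X$. To see $\gamma_i\in\mathcal{G}$, assign each point $b_i(a)$ to the least $\rho\in S$ with $b_i(a)=\rho(a)$; this splits $X_i$ into the finitely many computable pieces indexed by $\rho\in S$, on each of which $\gamma_i$ is the restriction of $\rho^{-1}\in S\subseteq\mathcal{G}$, hence lies in $\mathcal{G}$ by condition (iv). By condition (v) we conclude $\gamma_i\in\mathcal{G}$, and the tuple $(X_1,X_2,\gamma_1,\gamma_2)$ is the required effective paradoxical $\mathcal{G}$-decomposition.

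I expect the main obstacle to be the passage from the purely combinatorial failure of F\o lner's condition to a \emph{computable} expanding harem condition: one must both convert the incoming-boundary estimate into a neighbourhood-expansion estimate and boost the expansion factor strictly past $2$, by passing to the bounded power of $S_0$, so that an explicit total computable witness $h$ exists and Theorem \ref{K} applies. A secondary delicate point is ensuring that the two halves of the matching assemble into genuine pseudogroup elements via conditions (iv)--(v) while remaining computable; this is precisely where finiteness of $S$ (guaranteeing a \emph{finite} partition) is essential.
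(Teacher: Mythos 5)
Your proposal is correct and takes essentially the same approach as the paper: your $S$-neighbourhood graph is exactly the paper's graph $\Gamma(X)$ (whose edges are the pairs with $d_R(x,y)\le k$), and the remaining steps --- applying Theorem \ref{K} with $k=2$ and $h=\mathsf{id}$, then splitting the computable perfect $(1,2)$-matching via the min/max choice into $(X_1,X_2,\gamma_1,\gamma_2)$ --- match the paper's proof step for step. Your explicit appeal to pseudogroup axioms (iv)--(v) to show $\gamma_i\in\mathcal{G}$ merely fills in a verification the paper leaves implicit in the sentence ``Since $d_R(x,\gamma_i(x))\le k$ for all $x\in X$, we have $\gamma_i\in\mathcal{G}$.''
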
 

\begin{proof} 
This proof is an effective version of Theorem 4.9.2 of \cite{csc}.  
Let $R$ be a non-empty finite subset of $\mathcal{G}$ and let $n$ be a natural number such that for any non-empty finite subset $F$ of $X$ one has $|\partial_R F|\geq\frac{1}{n} |F|$. 
Define a function $d_R$ on $X$ by setting, for all $x,y \in X$, 
\[  
d_R (x, y) = \mathsf{min} \{ n \in \mathbb{N}  
\mbox{ : } \exists \rho_1 ,\ldots ,\rho_n \in R \cup R^{-1} 
\mbox{ ( } 
\rho_n \circ 
\ldots \circ \rho_1 (x) 
\mbox{ is defined } 
\] 
\[ \mbox{ and is equal to } y ) \} , 
\] 
where in the case when there exists no $n$ as in the formula above we put $d_R (x, y) = \infty$. 
The function $d_R$ satisfies the triangle inequality for any triple from $X$. 
Hence we use it as a metric. 
Since $R$ is a finite set of computable transformations, the set $\{ (x,y): d_R (x,y) \le k\}$ is computable uniformly on $k$. 
Therefore there is a computable enumeration of the set  
\[ 
\{ (x,y,l)\in X\times X\times \mathbb{N}: d_R (x,y ) \le l \} .   
\] 
Let $k$ be an integer such that $(1+\frac{1}{n})^k\geq 3$. 
By the choice of $R$, for any finite subset $F$ of the space $(X, d_R )$  we have $|N_1(F)|\geq (1+\frac{1}{n})|F|$.   
Thus in this space the size of the $k$-neighborhood $N_k (F)$ is at least $3|F|$ .

To find the corresponding effective paradoxical decomposition 
consider the bipartite graph $\Gamma(X)=(\mathbb{N}, \mathbb{N}, E)$, 
where the set $E\subset \mathbb{N} \times \mathbb{N}$ consists of all pairs $(x,y)$ 
with $d_R (x,y)\leq k$, with $x,y$ viewed as elements of $X$.
By discreteness of $(X,d_R )$ and computability properties of $d_R$, the graph $\Gamma (X)$ is highly computable.

If $F$ be a finite subset of $\mathbb{N}$  
then $|N_{\Gamma}(F)|=|N_k (F)|\geq 3|F|$. 
It follows that: 
\[ 
|N_{\Gamma}(F)|-2|F|\geq 3|F|-2|F|=|F|.
\]  
Therefore for any $n\in \mathbb{N}$ and a finite subset $F$ of the left side of $\Gamma (X)$ the inequality $n\leq |F|$ implies that $n\leq |N_{\Gamma}(F)|-2|F|$.
On the other hand viewing $F$ as a subset of the right side we have 
\[ 
|N_{\Gamma}(F)|- \frac{1}{2} |F| \geq 3|F|- \frac{1}{2}|F| \geq |F|.
\]  
Since the function $h=\mathsf{id}$ is computable, the graph 
$\Gamma (X)$ satisfies $c.e.H.h.c.(2)$ with respect to $h$.
By virtue of the Effective Hall Harem Theorem (Theorem \ref{K}), we deduce the existence of a computable perfect $(1,2)$-matching $M$ in $\Gamma (X)$. 
In other words, there is a computable surjective map 
$\phi: \mathbb{N} \rightarrow \mathbb{N}$ which is 
a 2-to-1 map with the condition that 
$d_R (x,\phi (x)) \le k$ for all $x \in X$.   

We now define functions $\psi_1, \psi_2$ as follows: 
\[ 
\left\{
\begin{array}{r}
\psi_1(n)=\min(n_1,n_2) \\
\psi_2(n)=\max(n_1,n_2)
\end{array}\right. , \text{ where } \phi(n_1)=n=\phi(n_2), n_1\neq n_2.
\] 
Since the function $\phi$ realizes a computable perfect $(1,2)$-matching, 
both $\psi_1$ and $\psi_2$ are computable.

Let $X_i$ be the range of $\psi_i$, $i\in \{ 1,2\}$. 
Clearly, both of them are computable sets and  $X_1\sqcup X_2 = X$. 
We define $\gamma_i:X_i\rightarrow X$ by $\gamma_i(n)=\phi(n)$. 
Since $d_R (x,\gamma_i (x)) \le k$ for all $x \in X$, we have $\gamma_i\in \mathcal{G}$. 
Therefore $(X_1,X_2,\gamma_1,\gamma_2)$ is an effective paradoxical decomposition of $X$.    
\end{proof}

\bigskip 

\begin{cor} 
Let $(X,d)$ be a countable discrete metric space. 
Assume that $W_{eff}(X)$ does not satisfy F\o lner's condition. 
Then $(X,d)$ has an effective paradoxical $W_{eff}(X)$-decomposition. 
\end{cor}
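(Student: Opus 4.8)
The plan is to read the statement as the instance $\mathcal{G}=W_{eff}(X)$ of Theorem \ref{pdx}. We work with the computable identification of $X$ with $\mathbb{N}$ that underlies the definition of $W_{eff}(X)$, under which $d$ is computable. Granting that $W_{eff}(X)$ is a pseudogroup of computable transformations of $\mathbb{N}$, the hypothesis that it fails F\o lner's condition is precisely the hypothesis of Theorem \ref{pdx}, and the tuple $(X_1,X_2,\gamma_1,\gamma_2)$ that the theorem returns is exactly an effective paradoxical $W_{eff}(X)$-decomposition. Thus the whole task reduces to checking that $W_{eff}(X)$ meets the hypotheses of Theorem \ref{pdx}.

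First I would verify the pseudogroup axioms for $W_{eff}(X)$ in their computable reading. By definition each $\rho\in W_{eff}(X)$ is a computable transformation, so $\alpha(\rho)$ and $\omega(\rho)$ are computable and $\rho$ is computable; the identity lies in $W_{eff}(X)$; composition preserves computability and, by the triangle inequality, the wobbling bounds add, giving axiom (iii); restriction to a computable subset and finite gluing give axioms (iv) and (v) directly from the corresponding closure properties of computable partial functions. The one step worth spelling out is closure under inverse. Given $\rho:S\to T$ in $W_{eff}(X)$ with $\sup_{x\in S}d(\rho(x),x)\le m$, every $y\in T$ has its unique $\rho$-preimage inside $N_m(\{y\})$, since $d(\rho^{-1}(y),y)=d(\rho^{-1}(y),\rho(\rho^{-1}(y)))\le m$; here the discreteness of $(X,d)$ is used, as it makes $N_m(\{y\})$ finite, hence computable under our identification. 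Consequently $T$ is computable and $\rho^{-1}(y)$ is found by a search bounded to $N_m(\{y\})\cap S$, so $\rho^{-1}$ is again a computable wobbling bijection. This is the single place where the assumption that $(X,d)$ is discrete genuinely enters.

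With these verifications in hand, $W_{eff}(X)$ is a pseudogroup of computable transformations on $\mathbb{N}$ that does not satisfy F\o lner's condition, so Theorem \ref{pdx} applies and yields computable sets $X_1\sqcup X_2=X$ together with computable $\gamma_1,\gamma_2\in W_{eff}(X)$ satisfying $\alpha(\gamma_i)=X_i$ and $\omega(\gamma_i)=X$, which is the desired decomposition. I do not anticipate a substantive obstacle: the mathematical force of the Corollary is carried entirely by Theorem \ref{pdx}, and the only delicate bookkeeping is the verification that $W_{eff}(X)$ is a computable pseudogroup, in particular the inverse-closure argument resting on discreteness.
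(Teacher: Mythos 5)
Your overall reduction is exactly the paper's own (implicit) proof: the Corollary carries no separate argument in the paper, being precisely the instance $\mathcal{G}=W_{eff}(X)$ of Theorem \ref{pdx}. The problem is in the one step you single out as delicate. You assume that the identification of $X$ with $\mathbb{N}$ "underlying the definition of $W_{eff}(X)$" makes $d$ computable, but no such assumption is available: that identification is an arbitrary bijection, and neither the definition of $W_{eff}(X)$ nor the hypotheses of the Corollary say anything about the complexity of the metric. Without a computable metric you cannot carry out a search "bounded to $N_m(\{y\})\cap S$"; and even if $d$ were computable, discreteness only tells you that $N_m(\{y\})$ is finite --- every finite set is trivially computable, but that is not the same as being able to produce this set effectively and uniformly in $y$ (there is no computable bound on where the elements of the ball sit in $\mathbb{N}$), which is what your bounded search needs. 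So the inverse-closure argument, as written, does not work.

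Fortunately the conclusion of that step is true for a simpler reason requiring neither discreteness nor any assumption on $d$. By the paper's definition of a computable transformation, any $\rho : S \to T$ in $W_{eff}(X)$ has computable domain $S$, computable range $T$, and $\rho$ is a computable function; then $\rho^{-1}$ is computable by unbounded search: given $y \in T$, enumerate the elements $x$ of $S$, compute $\rho(x)$, and halt at the first $x$ with $\rho(x)=y$. The search terminates because $\rho$ maps $S$ onto $T$, and the wobbling bound of $\rho^{-1}$ equals that of $\rho$, so $\rho^{-1}\in W_{eff}(X)$. With this repair (and keeping, as you correctly do, the effective reading of axiom (iv), i.e.\ restriction only to computable subsets, which is all that the proof of Theorem \ref{pdx} ever uses), your proof goes through. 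Note also that inside the proof of Theorem \ref{pdx} local finiteness and high computability come from the word metric $d_R$ attached to a finite set $R\subset W_{eff}(X)$ of computable transformations, not from $d$; so, contrary to your closing remark, discreteness of $(X,d)$ plays no computability role in the reduction at all.
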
 

\bigskip 

In the case of an action of a group $G$ on $X$ we will consider a more precise condition. 

\begin{df} \label{EPD}
Let $X$ be a set identified with $\mathbb{N}$ and let $G$ be a group which acts on $X$ by computable permutations. 
The space $(G,X)$ has a computable paradoxical decomposition, if there exists a finite set $K\subset G$ and two families of computable sets $(A_k)_{k\in K}$, $(B_k)_{k\in K}$ such that:
\[ 
X = \Big(\bigsqcup\limits_{k\in K}k(A_k) \Big)\bigsqcup\Big(\bigsqcup\limits_{k\in K}k(B_k)\Big)=\Big(\bigsqcup\limits_{k\in K}A_k\Big)=\Big(\bigsqcup\limits_{k\in K}B_k\Big).
\] 
We call $(K,(A_k)_{k\in K}, (B_k)_{k\in K})$ a computable paradoxical decomposition of $X$.
\end{df}

Observe that this definition makes sense without the assumption that any element of $G$ realizes a computable permutation of $X$. 
In fact one may demand this only for elements of $K$.  
Since Theorem \ref{EPDthm} does not transcend the assumptions of Definition \ref{EPD} we do not consider the extended version. 
This theorem is a natural development of Theorem \ref{pdx}. 

\bigskip 

\begin{theorem} \label{EPDthm} 
Let $G$ be a group of computable permutations on a countable set $X$ which does not satisfy F\o lner's condition. 
Then there is a finite subset $K \subset G$ which defines 
a computable paradoxical decomposition as in Definition \ref{EPD}.
\end{theorem}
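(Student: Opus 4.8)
The plan is to deduce Theorem \ref{EPDthm} from Theorem \ref{pdx} by applying the latter to the pseudogroup of computable transformations that are piecewise given by elements of $G$, and then to repackage the resulting $\mathcal{G}$-decomposition into the group form demanded by Definition \ref{EPD}. First I would let $\mathcal{G}$ be the pseudogroup consisting of all computable bijections $\rho : S \to T$ between computable subsets of $X$ for which there is a finite partition $S = \bigsqcup_j S_j$ and elements $g_j \in G$ with $\rho|_{S_j} = g_j|_{S_j}$. Since $G$ acts by computable permutations, $G \subseteq \mathcal{G}$, and $\mathcal{G}$ is a pseudogroup of computable transformations. Because the failure of F\o lner's condition for $(G,X)$ is witnessed by a finite $R \subseteq G$ and an $n$ with $|\partial_R F| \ge \frac{1}{n}|F|$ for all finite nonempty $F$, and the boundary $\partial_R F$ depends only on $R \cup R^{-1}$ as a set of partial maps, the same $R$ and $n$ witness that $(\mathcal{G},X)$ fails F\o lner's condition. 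Hence Theorem \ref{pdx} applies and yields computable data $(X_1,X_2,\gamma_1,\gamma_2)$ with $X = X_1 \sqcup X_2$, $\gamma_i \in \mathcal{G}$, $\alpha(\gamma_i)=X_i$ and $\omega(\gamma_i)=X$.

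The decisive observation is that, in the construction of Theorem \ref{pdx}, each value $\gamma_i(x)=\phi(x)$ satisfies $d_R(x,\phi(x)) \le k$ with $R \subseteq G$; thus $\phi(x) = g \cdot x$ for some $g$ in the finite ball $K := \bigcup_{j \le k}(R \cup R^{-1})^j \subseteq G$, a finite symmetric set containing the identity. Equivalently, the inverse maps $\psi_i := \gamma_i^{-1} : X \to X_i$ (the computable bijections $\psi_1,\psi_2$ produced in the proof of Theorem \ref{pdx}) satisfy $\psi_i(y) = g \cdot y$ for some $g \in K$, for every $y \in X$. I would then define a canonical computable $K$-coloring: for $y \in X$ let $c_i(y)$ be the least $g \in K$, in a fixed enumeration of $K$, with $\psi_i(y) = g \cdot y$; this is computable because $\psi_i$ and the action of each element of $K$ are computable. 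Setting $A_k = \{ y \in X : c_1(y)=k\}$ and $B_k = \{ y \in X : c_2(y)=k\}$ for $k \in K$ gives computable sets for which $\psi_1(y) = k \cdot y$ holds exactly on $A_k$ and $\psi_2(y) = k \cdot y$ holds exactly on $B_k$.

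It then remains to verify the three identities of Definition \ref{EPD}. Since $c_1$ (resp. $c_2$) is a total function on $X$, the families $(A_k)_{k \in K}$ and $(B_k)_{k \in K}$ each partition $X$, giving $X = \bigsqcup_{k} A_k = \bigsqcup_{k} B_k$. For the remaining identity, note $k(A_k) = \psi_1(A_k)$ and $k(B_k) = \psi_2(B_k)$; as $\psi_1,\psi_2$ are injective and the colorings partition $X$, the sets $k(A_k)$ are pairwise disjoint with $\bigsqcup_k k(A_k) = \psi_1(X) = X_1$, and likewise $\bigsqcup_k k(B_k) = \psi_2(X) = X_2$. Hence $\bigsqcup_k k(A_k) \sqcup \bigsqcup_k k(B_k) = X_1 \sqcup X_2 = X$, so $(K,(A_k)_{k \in K},(B_k)_{k \in K})$ is a computable paradoxical decomposition. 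I expect the points requiring care to be twofold: first, legitimately invoking Theorem \ref{pdx} by checking that non-F\o lner for the group transfers to non-F\o lner for the piecewise-$G$ pseudogroup, so that the bounded-displacement map $\phi$ really is assembled from genuine elements of $G$ lying in the finite ball $K$; and second, orienting the construction correctly, that is, building the pieces from the inverse maps $\psi_i$ rather than from the $\gamma_i$, so that the pieces $A_k,B_k$ tile $X$ while their $k$-translates tile the two halves $X_1,X_2$. The computability of the coloring and the disjointness bookkeeping are then routine.
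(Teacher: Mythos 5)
Your proposal is correct and follows essentially the same route as the paper: the paper's own proof likewise ``repeats the argument of Theorem~\ref{pdx}'' to obtain a computable $2$-to-$1$ map $\phi$ whose displacement lies in a finite set $K\subset G$, defines $\psi_1,\psi_2$ as the min/max of the $\phi$-fibers, colors each point by the element of $K$ realizing $\psi_i$ (your $c_i$ are exactly the paper's $\theta_i$), and takes $A_k,B_k$ to be the level sets. The only cosmetic difference is your auxiliary piecewise-$G$ pseudogroup used to invoke Theorem~\ref{pdx} formally; since you concede you need the internals of that proof (the bound $d_R(x,\phi(x))\le k$ with $R\subseteq G$) rather than its bare statement, this wrapper changes nothing of substance compared with the paper's choice to rebuild the graph directly with edge relation $y\in K\circ x$.
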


\begin{proof} 
In the beginning of the proof we repeat the argument of  
Theorem \ref{pdx}. 

We denote by $\circ$ the action of $G$ on $X$. 
Find a finite subset $K_0 \subset G$ and a natural number $n$ such that  for any finite subset $F\subset X$, 
there exists $g \in K_0$ such that $\frac{|F\setminus g\circ F|}{|F|}\geq \frac{1}{n}$. 
We may assume that $K_0$ is symmetric. 
Let $R = K_{0}\cup\{1\}$ and let a function 
$d_R$ be defined exactly as in the proof of Theorem \ref{pdx}: 
\[  
d_R (x, y) = \mathsf{min} \{ n \in \mathbb{N}  
\mbox{ : } \exists \rho_1 ,\ldots ,\rho_n \in R  \mbox{ ( }  \rho_n \circ  
\ldots \circ \rho_1 (x) = y ) \} , 
\] 
where in the case when there exists no $n$ as in the formula above we put $d_R (x, y) = \infty$. 
Then viewing $d_R$ as a metric, for any finite $F\subset X$ we have: 
\[ 
|N_1 (F)| = |R\circ F|\geq (1+\frac{1}{n})|F|.
\] 
Choose $n_1\in \mathbb{N}$ such that $(1+\frac{1}{n})^{n_1}\geq 3$ and set $K=R^{n_1}$. 
So for any finite $F\subset X$ we have $|N_{n_1}(F)| = |K\circ F|\geq 3|F|$.

Now note that the the set of edges of the bipartite graph $\Gamma (X)=(\mathbb{N}, \mathbb{N}, E)$, defined in the proof of Theorem \ref{pdx} consists of all pairs $(x,y) \in \mathbb{N} \times \mathbb{N}$ 
with $y \in K\circ x$, where $x,y$ are viewed as elements of $X$ under the identification $X = \mathbb{N}$. 
Since $G$ consists of computable permutations and $K$ is finite, 
the graph $\Gamma(X)$ is computably bipartite. 
Since the degree of every vertex is computable 
(by application of $K$), the graph is highly computable. 
Exactly as in the proof of Theorem \ref{pdx} we see that the graph $\Gamma(X)$ satisfies $c.e.H.h.c.(2)$ with respect to $h= \mathsf{id}$. 
By virtue of the Effective Hall Harem Theorem, we deduce the existence of a computable perfect $(1,2)$-matching $M$ in $\Gamma_R(X)$. 
In other words, there is a computable surjective 2-to-1 map 
$\phi : \mathbb{N} \rightarrow \mathbb{N}$ such that for any $n\in \mathbb{N}$ 
there is  $g\in K$ with $n = g \circ \phi(n)$.   

Repeating the proof of Theorem \ref{pdx} define functions $\psi_1, \psi_2$ as follows: 
\[ 
\left\{
\begin{array}{r}
\psi_1(n)=\min(n_1,n_2) \\
\psi_2(n)=\max(n_1,n_2)
\end{array}\right. , \text{ where } \phi(n_1)=n=\phi(n_2), n_1\neq n_2.
\] 
Since the function $\phi$ realizes a computable perfect $(1,2)$-matching, 
both $\psi_1$ and $\psi_2$ are computable. 
Moreover, they preserve $\langle K \rangle$-orbits.  

Define $\theta_1(n)$ to be $g\in K$ with $\psi_1(n) =g \circ n$, 
and $\theta_2(n)$ to be $h\in K$ with $\psi_2(n)= h\circ n$. 
Observe that $\theta_1$, $\theta_2$ can be chosen computable and 
$\theta_1(n), \theta_2(n)\in K$ for all $n\in \mathbb{N}$. 

For each $k\in K$ define sets $A_k$ and $B_k$ in the following way:
\[ 
A_k=\{n\in\mathbb{N}: \theta_1(n)=k\},\; 
B_k=\{n\in\mathbb{N}: \theta_2(n)=k\}.
\] 
It is clear that these sets are computable and
\[ 
X=\bigsqcup\limits_{k\in K}A_k=\bigsqcup\limits_{k\in K}B_k.
\] 
For each $n\in A_k$, the value $\psi_1(n)$ is $k\circ n$. 
Thus $\psi_1(\mathbb{N})=\bigsqcup\limits_{k\in K}k\circ A_k$. 
Similarly we can show that 
$\psi_2(\mathbb{N})=\bigsqcup\limits_{k\in K}k\circ B_k$. 
Since $\mathbb{N}=\psi_1(\mathbb{N})\bigsqcup\psi_2(\mathbb{N})$, we have
\[ 
X = \Big(\bigsqcup\limits_{k\in K}k\circ A_k\Big)\bigsqcup\Big(\bigsqcup\limits_{k\in K}k\circ B_k\Big).
\] 
Therefore $(K,(A_k)_{k\in K}, (B_k)_{k\in K})$ is an effective paradoxical decomposition of the action of $G$ on $X$.
\end{proof}

\begin{rem} 
Groups of computable permutations of $\mathbb{N}$ are becoming an attractive object of investigations in computable algebra. 
We recommend the survey article \cite{MS} and the recent paper  
of the second author \cite{I}. 
Theorem \ref{EPDthm} shows how naturally these groups appear in computable amenability. 
\end{rem}

\section{Complexity of paradoxical decompositions} 

The approach of this section is similar to that in \cite{khmi}.  
Throughout the section, we assume that $G$ is a computable group. 
We then identify $G$ with $\mathbb{N}$ and regard  multiplication of $G$ and the inverse as computable functions $\mathbb{N}^2 \rightarrow \mathbb{N}$ and 
$\mathbb{N} \rightarrow \mathbb{N}$ respectively. 
Such a realization of $G$ is called a {\em computable presentation} of $G$. 
For simplicity we assume that $1$ is the neutral element of $G$.  
The expression  $x^{-1}$ means the inverse in $G$.   

Note that for any $g\in G$ the function $g \cdot x$, $x\in G$, defines a computable permutation on $\mathbb{N}$. 
In particular the left action of $G$ on $G$ is by computable permutations of $\mathbb{N}$. 

\begin{df} \label{EPD2} 
The computable group $G$ has a computable paradoxical decomposition, if the left action of $G$ on $G$ has a computable paradoxical decomposition. 
\end{df}

By Theorem \ref{EPDthm} (and its proof) we have the following statement. 

\begin{cor} \label{EPDgrp}
Let $K_0$ be a finite subset of $G$ and suppose there is $n\in \mathbb{N}\setminus \{ 0\}$ such that the following condition is satisfied:    
\begin{itemize} 
\item for any finite subset $F\subset G$, there exists $k \in K_0$ with $\frac{|F\setminus kF|}{|F|}\geq \frac{1}{n}$.  
\end{itemize} 
Let $n_1$ be such that $(1+\frac{1}{n})^{n_1} \ge 3$.  
Then the subset $K = (K \cup K^{-1} )^{n_1}$ defines a computable paradoxical decomposition as in Definition \ref{EPD}.
\end{cor}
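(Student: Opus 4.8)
The plan is to read this off Theorem \ref{EPDthm} applied to the left regular action of $G$ on itself, keeping track of the explicit generating set built inside that theorem's proof. As noted just before the corollary, each left translation $x\mapsto g\cdot x$ is a computable permutation of $\mathbb{N}\cong G$, so $G$ acts on $X=G$ by computable permutations; thus the only thing to check before invoking Theorem \ref{EPDthm} is that the displayed hypothesis is exactly the failure of F\o lner's condition (Definition \ref{pseF}, Remark \ref{actF}) for this action.

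Next I would supply the short computation linking the two formulations. Put $R=(K_0\cup K_0^{-1})\cup\{1\}$. Given a finite $F\subset G$, choose $k\in K_0$ with $|F\setminus kF|\ge\frac{1}{n}|F|$; since $|kF|=|F|$ we have $|F\setminus kF|=|kF\setminus F|$, and therefore $|R\circ F|\ge|F\cup kF|=|F|+|kF\setminus F|\ge(1+\frac{1}{n})|F|$. This is precisely the $1$-ball expansion $|N_1(F)|\ge(1+\frac{1}{n})|F|$ that opens the proof of Theorem \ref{EPDthm}, so its hypotheses are met and a computable paradoxical decomposition exists.

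To pin down the index set I would then follow the construction inside that proof rather than merely its conclusion. There one symmetrizes $K_0$, forms $R$ as above, selects $n_1$ with $(1+\frac{1}{n})^{n_1}\ge3$ so that $|N_{n_1}(F)|=|R^{n_1}\circ F|\ge3|F|$, runs the Effective Hall Harem Theorem (Theorem \ref{K}) on the associated graph $\Gamma(X)$, and extracts computable families $(A_k)$, $(B_k)$ indexed by $R^{n_1}$ satisfying the two partition identities of Definition \ref{EPD}. The set $R^{n_1}$ is the ball of radius $n_1$ in the Cayley graph generated by $K_0\cup K_0^{-1}$, which is the set written $(K_0\cup K_0^{-1})^{n_1}$ in the statement.

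I expect no genuine obstacle here, since the mathematical content is entirely contained in Theorem \ref{EPDthm}. The only points requiring care are the elementary identity $|F\setminus kF|=|kF\setminus F|$ that converts the hypothesis into the $N_1$-expansion bound, and the harmless bookkeeping that the advertised $(K_0\cup K_0^{-1})^{n_1}$ should be read as $R^{n_1}$ with $R=(K_0\cup K_0^{-1})\cup\{1\}$, so that the neutral element and all shorter products are included; because $K_0\cup K_0^{-1}$ is symmetric and $n_1\ge 2$, adjoining the identity changes neither the expansion estimate nor the resulting decomposition.
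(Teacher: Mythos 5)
Your proposal is correct and follows exactly the paper's route: the paper proves this corollary simply by citing Theorem \ref{EPDthm} ``and its proof,'' since that proof, applied to the left regular action (which is by computable permutations), symmetrizes $K_0$, forms $R=(K_0\cup K_0^{-1})\cup\{1\}$, and produces the decomposition indexed by $K=R^{n_1}$. The two details you supply --- the identity $|F\setminus kF|=|kF\setminus F|$ yielding $|R\circ F|\ge(1+\tfrac{1}{n})|F|$, and reading the (typo-ridden) index set $(K_0\cup K_0^{-1})^{n_1}$ as $R^{n_1}$ --- are precisely what the paper leaves implicit.
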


In particular if $G$ is a computable non-amenable group then it has a 
computable paradoxical decomposition. 
This corollary leads to the following definition. 

\begin{df}\label{elf}
Let 
\[ 
\mathfrak{W}_{BT}=\left\{K\subset G \mbox{ is finite : } \exists n\in \mathbb{N} \; (\forall \mbox{ finite } F \subset G)(\exists k \in K)\left(\frac{|F\setminus kF|}{|F|}\geq \frac{1}{n}\right)\right\}.
\] 
We call $\mathfrak{W}_{BT}$ the set of witnesses of the Banach-Tarski paradox.
\end{df}

\begin{pr}
For any computable group the family $\mathfrak{W}_{BT}$ 
belongs to the class $\Sigma^{0}_{2}$ of the Arithmetical Hierarchy.
\end{pr}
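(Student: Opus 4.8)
The plan is to read off the arithmetical complexity directly from the quantifier structure of the defining formula of $\mathfrak{W}_{BT}$, after checking that the innermost matrix is decidable. Throughout I fix a standard computable coding of finite subsets of $G=\mathbb{N}$ by canonical indices, so that from a code one can effectively recover the corresponding finite set, decide membership, and compute cardinalities. Under this coding the clause ``$K$ ranges over finite subsets of $G$'' becomes ``the code of $K$ ranges over $\mathbb{N}$,'' and $\mathfrak{W}_{BT}$ is identified with a subset of $\mathbb{N}$, so that the assertion to be proved is that this set of codes lies in $\Sigma^{0}_{2}$.

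First I would verify that, given codes for finite sets $K$ and $F$, a number $n$, and an element $k\in K$, the inequality $\frac{|F\setminus kF|}{|F|}\geq\frac{1}{n}$ is decidable. Since $G$ has a computable presentation, left multiplication by $k$ is a computable function of its argument, so the finite set $kF=\{k\cdot f: f\in F\}$ can be computed from $k$ and $F$; hence $F\setminus kF$, together with the cardinalities $|F|$ and $|F\setminus kF|$, are all computable. The inequality is then equivalent to the decidable numerical relation $n\cdot|F\setminus kF|\geq|F|$. Here I restrict the quantifier over $F$ to non-empty sets, as in the F\o lner condition, to avoid division by zero; this does not affect the complexity.

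Next I would absorb the bounded quantifier $\exists k\in K$. Because $K$ is finite and explicitly presented by its code, the predicate $P(K,n,F)$, abbreviating $(\exists k\in K)\left(\frac{|F\setminus kF|}{|F|}\geq\frac{1}{n}\right)$, is a finite disjunction of decidable predicates and is therefore itself decidable, uniformly in the codes of $K$, $n$ and $F$. Consequently the predicate $(\forall\text{ non-empty finite }F)\,P(K,n,F)$ consists of a single unbounded universal number quantifier over the codes of $F$ applied to a decidable matrix, so it is $\Pi^{0}_{1}$ uniformly in the code of $K$ and the parameter $n$.

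Finally, the membership relation $K\in\mathfrak{W}_{BT}$ is exactly $\exists n\,(\forall\text{ non-empty finite }F)\,P(K,n,F)$, that is, a single existential number quantifier prefixed to a $\Pi^{0}_{1}$ predicate, which is the normal form of a $\Sigma^{0}_{2}$ relation. I do not expect a genuine obstacle in this argument: it is essentially a quantifier count, and the only substantive point is the decidability of the matrix, which rests entirely on the computability of multiplication in $G$ together with the effectiveness of the usual operations on canonically coded finite sets.
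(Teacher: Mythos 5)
Your proof is correct and follows essentially the same route as the paper's: both arguments rest on the decidability (in a computable group, with finite sets given by canonical codes) of the inequality $n\cdot|F\setminus kF|\geq |F|$, absorb the finite disjunction over $k\in K$, and then read off the $\Sigma^{0}_{2}$ bound from the prefix $\exists n\,\forall F$ over a decidable matrix. The paper phrases the middle step in terms of computably enumerable sets and projections rather than your direct quantifier count, but this is only a cosmetic difference; your explicit treatment of the coding and of the non-empty-$F$ restriction is a welcome tightening, not a new idea.
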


\begin{proof}
Since the group $G$ is computable, for any finite subsets  $K$, $F$ of $G$, and any $n\in\mathbb{N}$, we can effectively check if the inequality 
$\frac{|F\setminus kF|}{|F|}< \frac{1}{n}$ holds for all $k\in K$. 
Therefore, the set of triples $(n,K,F)$ such that 
$ \frac{|F\setminus kF|}{|F|} < \frac{1}{n}$ holds for all $k\in K$ is computably enumerable, i.e. belongs to $\Sigma^0_1$. 

Since the projection of this set to the first two coordinates is also computably enumerable, the set 
\[ 
\mathfrak{W}_{BT}'=\{(K,n): (\forall \mbox{ finite }F \subset \Gamma)(\exists k\in K)(\frac{|F\setminus kF|}{|F|}\geq \frac{1}{n})\}
\]  
belongs to the class $\Pi_{1}^{0}$. 
The set $\mathfrak{W}_{BT}$ consists of all finite subsets $K\subset G$ such that there exists  $n\in \mathbb{N}$ with $(K,n)\in \mathfrak{W}_{BT}'$.  
Thus $\mathfrak{W}_{BT}$ belongs to the class $\Sigma^0_2$.
\end{proof}
\bigskip

It is well-known that a finitely generated free group has a computable presentation.  
We consider the following theorem as the most natural example where  the set $\mathfrak{W}_{BT}$ is computable.

\begin{theorem}\label{fg}
The family $\mathfrak{W}_{BT}$ is computable for any finitely generated free group.
\end{theorem}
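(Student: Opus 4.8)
The plan is to reduce membership in $\mathfrak{W}_{BT}$ to a purely algebraic property of the subgroup generated by $K$. Concretely, I would first establish the characterization that, for a finite $K\subseteq G$, one has $K\in\mathfrak{W}_{BT}$ if and only if the subgroup $H=\langle K\rangle$ is non-amenable. The reason is that the condition defining $\mathfrak{W}_{BT}$ is exactly a uniform (quantitative) failure of F\o lner's condition for left translations by the elements of $K$: the negation $K\notin\mathfrak{W}_{BT}$ asserts that for every $n$ there is a finite $F\subseteq G$ with $|F\setminus kF|<\frac1n|F|$ for all $k\in K$, i.e. $F$ is almost invariant under $K$. Since $|kF|=|F|$, we have $|F\setminus kF|=\tfrac12|kF\,\triangle\,F|$, so this is precisely a F\o lner condition along $K$, in the sense of Definition \ref{pseF} and Remark \ref{actF}.

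The technical heart is the reduction from $G$ to $H$. Left translation by any $k\in K$ preserves each right coset $Hx$, and the restriction of the whole $K$-translation structure to $Hx$ is isomorphic, via right multiplication by $x^{-1}$, to the Cayley graph of $H$ with respect to $K$. Given a finite $F\subseteq G$, I decompose $F=\bigsqcup_x F_x$ with $F_x=F\cap Hx$; since $kF_x\subseteq Hx$ for $k\in H$, the cosets being disjoint gives the identity $|F\setminus kF|=\sum_x|F_x\setminus kF_x|$. This lets me pass between $G$ and $H$ in both directions. If $H$ is amenable I take F\o lner sets inside $H$, which directly witness $K\notin\mathfrak{W}_{BT}$. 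If $H$ is non-amenable I use the isoperimetric (Cheeger) form of non-amenability, namely that there is $c>0$ with $\sum_{k\in K}|F_0\setminus kF_0|\ge c|F_0|$ for every finite $F_0\subseteq H$; combined with the coset identity summed over $k\in K$, this produces a single $n$ (depending only on $c$ and $|K|$) that works for all finite $F\subseteq G$, so $K\in\mathfrak{W}_{BT}$. I expect this equivalence, and in particular extracting the uniform $n$ from positivity of the isoperimetric constant, to be the main obstacle.

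Next I specialize to a finitely generated free group $G$. By the Nielsen--Schreier theorem every subgroup of $G$ is free, and a free group is amenable precisely when it has rank at most one, i.e. precisely when it is abelian. Hence $\langle K\rangle$ is non-amenable if and only if it is non-abelian, and $\langle K\rangle$ is non-abelian if and only if $K$ contains two elements $a,b$ with $ab\ne ba$. Indeed, if all pairs from $K$ commute then, picking any nontrivial $a\in\langle K\rangle$, the whole subgroup lies in the centralizer $C_G(a)$, which is infinite cyclic in a free group, so $\langle K\rangle$ is cyclic and amenable; conversely, if $a,b\in K$ do not commute then $\langle a,b\rangle$ is free of rank two, whence $\langle K\rangle\supseteq\langle a,b\rangle$ is non-amenable. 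This converts the analytic membership condition into the combinatorial one: does $K$ contain a non-commuting pair.

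Finally I address decidability. Using the computable presentation of $G$, from a strong index for the finite set $K$ I can list its elements, and for each pair $a,b\in K$ compute the products $ab$ and $ba$ by the computable multiplication and test their equality, which is decidable because equality of group elements is equality of natural numbers. Thus ``$K$ contains a non-commuting pair'' is a decidable property of the index of $K$, and by the characterization above this property is equivalent to $K\in\mathfrak{W}_{BT}$. Therefore $\mathfrak{W}_{BT}$ is computable. In this plan the free-group specialization and the decidability step are routine; the weight of the argument rests on the first equivalence.
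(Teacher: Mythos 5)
Your proposal is correct and takes essentially the same route as the paper: your first equivalence ($K\in\mathfrak{W}_{BT}$ if and only if $\langle K\rangle$ is non-amenable) is exactly the paper's Proposition \ref{fif}, and the paper then likewise reduces membership to the existence of a non-commuting pair in $K$ (via Nielsen--Schreier and cyclicity of centralizers), decided through the solvable word problem of the free group. The only difference is cosmetic: to prove the key equivalence the paper (following Coornaert) extracts a F\o lner set of $\langle K\rangle$ from an almost-invariant $F\subseteq G$ by pigeonhole over right cosets, whereas you argue the contrapositive by summing the isoperimetric inequality of $\langle K\rangle$ over the coset pieces $F\cap\langle K\rangle x$ --- the same coset decomposition read in the opposite direction.
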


Before the proof of this theorem we give some reformulation of witnessing. 
This observation belongs to M. Cavaleri. 
It simplifies our original argument. 

\begin{pr}\label{fif} 
Let $G$ be a group and $K$ be a finite subset of $G$.  
Then $K \in \mathfrak{W}_{BT}$ if and only if $\langle K \rangle$ is a non-amenable subgroup of $G$. 
\end{pr}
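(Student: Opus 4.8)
The plan is to recognize membership in $\mathfrak{W}_{BT}$ as exactly the failure of the Følner condition for the left-translation action of the finite set $K$ on $G$, and then to reduce this to amenability of $H := \langle K \rangle$ by exploiting that left multiplication by elements of $H$ preserves the right cosets $Hg$. First I would rewrite the defining inequality: since $|kF| = |F|$, one has $|F \setminus kF| = \tfrac12 |kF \triangle F|$, so by Definition \ref{elf} the condition $K \in \mathfrak{W}_{BT}$ means precisely that $\inf_F \max_{k \in K} \frac{|kF \triangle F|}{|F|} > 0$, where $F$ ranges over finite nonempty subsets of $G$; equivalently, there is no sequence of finite subsets of $G$ asymptotically invariant under $K$.

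For the easy implication I would show that if $H$ is amenable then $K \notin \mathfrak{W}_{BT}$. Since $K \subseteq H$ is finite, the Følner condition for $H$ produces, for each $\epsilon > 0$, a finite nonempty $F \subseteq H \subseteq G$ with $\frac{|kF \triangle F|}{|F|} < \epsilon$ for all $k \in K$; these sets witness $\inf_F \max_k \frac{|kF \triangle F|}{|F|} = 0$, so no $n$ can serve in the defining formula, i.e. $K \notin \mathfrak{W}_{BT}$.

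The substantive implication is to show that $K \notin \mathfrak{W}_{BT}$ forces $H$ to be amenable. The key observation is that for $k \in H$ one has $k(Hg) = Hg$, so left translation by $K$ respects the partition of $G$ into right cosets of $H$, and each coset $Hg$ is, via $x \mapsto xg^{-1}$, an $H$-equivariant copy of $H$ acting on itself by left multiplication. Given a finite $F \subseteq G$ that is $\epsilon$-almost invariant under every $k \in K$, I would split $F$ into its coset pieces $F_i = F \cap Hg_i$, observe that $kF \triangle F$ is the disjoint union of the $kF_i \triangle F_i$, sum the inequalities $\sum_i |kF_i \triangle F_i| < \epsilon |F|$ over $k \in K$, and then apply a pigeonhole/averaging argument to extract a single index $i$ with $F_i \neq \emptyset$ for which $\max_{k \in K} \frac{|kF_i \triangle F_i|}{|F_i|} < \epsilon|K|$. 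Translating $F_i$ back into $H$ by $x \mapsto xg_i^{-1}$ yields a finite nonempty subset of $H$ that is $\epsilon|K|$-almost invariant under $K$. Letting $\epsilon \to 0$ produces a Følner sequence for $H$ relative to the generating set $K$, and since $K$ generates $H$ the telescoping estimate $\frac{|s_1 s_2 F \triangle F|}{|F|} \le \frac{|s_1 F \triangle F|}{|F|} + \frac{|s_2 F \triangle F|}{|F|}$ upgrades almost-invariance under $K \cup K^{-1}$ to almost-invariance under every finite subset of $H$, establishing amenability.

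The main obstacle I anticipate is this middle implication: making the pigeonhole selection of a single good coset rigorous (ensuring the chosen $F_i$ is nonempty and works simultaneously for all $k \in K$), together with the bookkeeping that promotes almost-invariance for the generating set $K$ to full amenability via the word-length telescoping argument. Both ingredients are standard in amenability theory, so the work lies in organizing them cleanly rather than in any new idea.
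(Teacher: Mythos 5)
Your proposal is correct and takes essentially the same route as the paper's proof: the easy direction uses Følner sets of $\langle K\rangle$ viewed as subsets of $G$, and the substantive direction decomposes an almost-invariant set $F\subset G$ along right cosets of $\langle K\rangle$, extracts one good coset piece by a pigeonhole/averaging argument, and translates it back into $\langle K\rangle$ — exactly the argument the paper borrows from Coornaert's Proposition 9.2.13 (the paper fixes $m=n|K|$ in advance where you let $\epsilon\to 0$, and it works with $|F\setminus kF|$ rather than $|kF\triangle F|$, but these are the same up to a factor of $2$).
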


\begin{proof} 
The necessity holds by F\o lner's definition of amenability. 
Assume that $K \notin \mathfrak{W}_{BT}$. 
It follows that for every $n$ there exists a set $F_n$ such that 
\[ 
(\forall k \in K)\left(\frac{|F_n \setminus kF_n|}{|F_n|}\leq \frac{1}{n}\right) . 
\] 
In order to show that $\langle K \rangle$ is amenable 
we follow the proof of Proposition 9.2.13 from \cite{cor}. 
Take any $n\in \mathbb{N}$. 
Put $m=n|K|$. 
Let us show that there exists $t_0\in G$ such that the set $F_mt_0^{-1} \cap \langle K \rangle =\{ k\in \langle K \rangle: kt_0 \in F_m  \}$ is  $\frac{1}{n}$-F\o lner for $K$. 
Let $T\subset G$ be a complete set of representatives of the right cosets of $\langle K \rangle$ in $G$. 
Clearly, every $g\in G$ can be uniquely written in the form $g=ht$ with $h\in \langle K \rangle$ and $t\in T$. 
We then have:

\begin{equation}\label{el1}
|F_m|= \sum\limits_{t\in T}|
F_mt^{-1} \cap \langle K \rangle| . 
\end{equation}
For every $x\in K$, we have $xF_m= \bigsqcup\limits_{t\in T}(xF_mt^{-1} \cap \langle K \rangle)t$, hence:

\[ 
xF_m\setminus F_m = \bigsqcup\limits_{t\in T}((xF_mt^{-1} \cap \langle K \rangle)\setminus (F_mt^{-1} \cap \langle K \rangle))t.
\] 
This gives us:

\begin{equation}\label{el2}
|xF_m\setminus F_m|= \sum\limits_{t\in T}|(xF_mt^{-1} \cap \langle K \rangle)\setminus (F_mt^{-1} \cap \langle K \rangle)|.
\end{equation}
Since for all $x\in K$,
\[ 
|xF_m\setminus F_m|\leq \frac{|F_m|}{m},
\] 
using (\ref{el1}) and (\ref{el2}), we get

\begin{align*}
&\sum\limits_{t\in T}|(KF_mt^{-1} \cap \langle K \rangle)\setminus (F_mt^{-1} \cap \langle K \rangle)|=\\&\sum\limits_{t\in T}|\bigcup\limits_{x\in K} ((xF_mt^{-1} \cap \langle K \rangle)\setminus (F_mt^{-1} \cap \langle K \rangle))|\leq \frac{|K|}{m}\sum\limits_{t\in T}|
F_mt^{-1} \cap \langle K \rangle| . 
\end{align*}
By the pigeonhole principle, there exists $t_0\in T$ such that 
\[ 
|(KF_mt_0^{-1} \cap \langle K \rangle)\setminus (F_mt_0^{-1} \cap \langle K \rangle)|\leq \frac{1}{n}|F_mt_0^{-1} \cap \langle K \rangle| . 
\] 
Clearly $F_mt_0^{-1} \cap \langle K \rangle$ is an $\frac{1}{n}$-F\o lner set with respect to $K$. 
Since $n$ was arbitrary, $\langle K \rangle$ is amenable. 
This finishes the proof. 
\end{proof}

\bigskip

\begin{proof} {\em (Theorem \ref{fg}).} 
Let $\mathbb{F}$ be a finitely generated free group under the standard presentation. 
Since it is computable, the equation $xy = yx$  can be effectively verified for every $x,y\in \mathbb{F}$. 
We will show that $K\in \mathfrak{W}_{BT}$ if and only 
if there exist $ x,y\in K$ such that $ xy\neq yx$. 
This will give the result. 

$(\Rightarrow)$
Let us assume that $ xy=yx$ for every $x,y\in K$. 
Since $\mathbb{F}$ is a free group, there exists $z\in\mathbb{F}$ such 
that all words from $K$ are powers of $z$. 
Since the subgroup $\langle z \rangle$ is cyclic, the subgroup $\langle K\rangle$ is amenable and 
for every $n$ there is a finite set $F$, which is an $\frac{1}{n}$-F\o lner with respect to $K$. 
Clearly $K\notin \mathfrak{W}_{BT}$. 

$(\Leftarrow)$
Let us assume that there exist $x,y\in K$ with $ xy\neq yx$. 
Then $x,y$ generate a free subgroup of $\mathbb{F}$ of rank $2$. 
By Proposition \ref{fif} there is a natural number $n$ 
such that $\mathbb{F}$ does not contain $\frac{1}{n}$-F\o lner subsets with respect to both $\{ x,y\}$ and $K$. 
\end{proof}

We add few words concerning the following question. 
\begin{itemize} 
\item Are there natural examples with 
non-computable  $\mathfrak{W}_{BT}$? 
\end{itemize}

In \cite{DuI} (see also \cite{Du}) we give an example of a finitely presented group, say $H_{nA}$, with decidable word problem such that detection of all finite subsets of $H_{nA}$ which generate amenable subgroups, is not decidable. 
Applying Proposition \ref{fif} we see that the set $\mathfrak{W}_{BT}$ 
is not computable in this group.  
In \cite{DuI} we used slightly involved methods of computability theory. 
It can be also derived from \cite{DuI} and \cite{Du} that 
when a computable group $G$ is {\em fully residually free} \cite{kap}, 
the corresponding set $\mathfrak{W}_{BT}$ is computable.

\section*{Acknowledgements} 
\begin{itemize} 
\item
The authors are grateful to M. Cavaleri, T. Ceccherini-Silberstein and L. Ko{\l}odziejczyk for reading the paper and helpful remarks. 
In particular, the idea of Proposition \ref{fif} belongs to M. Cavaleri. 
\end{itemize}

\end{document}